\begin{document}


\newcommand{\n}{\ensuremath{n \in \omega}}
\newcommand{\om}{\ensuremath{\omega}}
\newcommand{\lh}{\mbox{\rm lh}}
\newcommand{\ep}{\ensuremath{\varepsilon}}
\newcommand{\set}[2]{\ensuremath{\{#1 \hspace{0.3mm} \mid \hspace{0.3mm} #2\}}}

\newcommand\tboldsymbol[1]{%
\protect\raisebox{0pt}[0pt][0pt]{%
$\underset{\widetilde{}}{\boldsymbol{#1}}$}\mbox{\hskip 1pt}}

\newcommand{\del}{\ensuremath{\Delta^1_1}}
\newcommand{\sig}{\ensuremath{\Sigma^1_1}}
\newcommand{\pii}{\ensuremath{\Pi^1_1}}
\newcommand{\ca}[1]{\ensuremath{\mathcal{#1}}}
\newcommand{\W}{{\rm W}}
\newcommand{\WF}{{\rm WF}}
\newcommand{\ie}{{\rm i.e.,} }
\newcommand{\cf}{{\rm cf.~}}

\newcommand{\ck}{\ensuremath{\om_1^{\rm CK}}}
\newcommand{\ckr}[1]{\ensuremath{\om_1^{#1}}}

\newcommand{\LO}{{\rm LO}}
\newcommand{\WO}{{\rm WO}}

\newcommand{\hless}{\ensuremath{<_{\rm h}}}
\newcommand{\hleq}{\ensuremath{\leq_{\rm h}}}
\newcommand{\heq}{\ensuremath{=_{\rm h}}}

\newcommand{\hgeq}{\ensuremath{\geq_{\rm h}}}
\newcommand{\tgeq}{\ensuremath{\geq_{\rm T}}}

\newcommand{\M}{{\rm M}}
\newcommand{\baire}{\ca{N}}
\newcommand{\Baire}{\baire}
\newcommand{\cantor}{\ensuremath{2^\om}}
\newcommand{\cn}[2]{\ensuremath{#1 \ \hat{} \ #2}}

\newcommand{\St}{{\rm St}}
\newcommand{\tleq}{\leq_{\rm T}}

\newcommand{\rfn}[1]{\{#1\}}

\newcommand{\bolds}{\tboldsymbol{\Sigma}}
\newcommand{\boldp}{\tboldsymbol{\Pi}}
\newcommand{\boldd}{\tboldsymbol{\Delta}}

\newcommand{\tu}[1]{\textup{#1}}

\renewcommand{\ie}{\text{i.e.,}~}


\title[One more recursive-theoretic characterization of the TVC]{One more recursive-theoretic characterization of the Topological Vaught Conjecture}

\thanks{The author is currently a Fellow of the \textit{Programme 2020 researchers : Train to Move} at the Mathematics Department ``Guiseppe Peano" of the University of Turin, Italy. Some parts of this article were written, while the author was a Scientific Associate at TU Darmstadt, Germany.\\
Thanks are reserved to {\sc A. Montalban} for valuable suggestions.}

\address{Vassilios Gregoriades\\
Via Carlo Alberto, 10\\ 
10123 Turin, Italy}

\email{vassilios.gregoriades@unito.it}

\subjclass[2010]{03E15; 03D60}

\author[V. Gregoriades]{Vassilios Gregoriades}

\keywords{Polish $G$-spaces, Topological Vaught Conjecture, Effective Descriptive Set Theory, Borel orbits}

\maketitle

\newtheorem{theorem}{Theorem} 
\newtheorem{lemma}[theorem]{Lemma}
\newtheorem{definition}[theorem]{Definition}
\newtheorem{proposition}[theorem]{Proposition}
\newtheorem{corollary}[theorem]{Corollary}
\newtheorem{remark}[theorem]{Remark}
\newtheorem{remarks}[theorem]{Remarks}
\newtheorem{example}[theorem]{Example}
\newtheorem{examples}[theorem]{Examples}
\newtheorem{claim}{Claim}

\begin{abstract}
We prove in {\bf ZF} a recursive-theoretic characterization of the Topological Vaught Conjecture by  revisiting the fact that orbits in Polish $G$-spaces are Borel sets.
\end{abstract}

\section{Introduction}

The question of characterizing the (Topological) Vaught Conjecture in terms of recursion theory has been investigated by Montalban \cite{montalban_a_computability_theoretic_equivalent_to_VC,montalban_analytics_equivalence_relations_satisfying_HYP_is_recursive}, where he provides the answer under some determinacy hypothesis. In this note we provide one more characterization of the topological version of the conjecture in a recursive-theoretic language, which is actually provable in the Zermelo-Fraenkel theory {\bf ZF}. Another distinctive aspect of our characterization is that, unlike \cite{montalban_analytics_equivalence_relations_satisfying_HYP_is_recursive}, it refers to the Polish group actions rather to the more general case of analytic equivalence relations, although one of the directions holds in the latter case. We do not know if the other direction is also true in the general case.

We will make substantial use of many results from \emph{effective} descriptive set theory, and we make no attempt in presenting any form of introduction to the latter. The standard textbook on the subject is \cite{yiannis_dst}. We recall however some central notions.

The natural numbers are identified with the first infinite ordinal $\om$ and the \emph{Baire space} $\om^\om$ is denoted by \baire. The \emph{Cantor space} is $\cantor$, \ie the set of all members of the Baire space with values in $2 \equiv \{0,1\}$. By $\om^{< \om}$ we mean the set of all finite sequences of elements of \om, including the empty one. The length $\lh(s)$ of a given $s \in \om^{< \om}$ is the unique number $n$ for which $s = (s_0,\dots,s_{n-1})$; as usual $\om^{n}$ the set of all $s \in \om^{< \om}$ with length $n$. We also fix a recursive injection $\langle \cdot \rangle: \om^{< \om} \to \om$. If $t = \langle s_0, \dots, s_{n-1} \rangle$ and $i < n$ we denote by $(t)_i$ the number $s_i$. If $t$ does not have the preceding form or $i \geq n$ we let $(t)_i$ be $0$. We fix once and for all the enumeration $(q_s)_{s \in \om}$ of all non-negative rational numbers, $q_s = (s)_0 \cdot ((s)_1+1)^{-1}$. Given $\alpha \in \baire$ and $n \in \om$ we denote by $\alpha \upharpoonright n$ the finite sequence $(\alpha(0), \dots, \alpha(n-1))$.

We say that a Polish space $\ca{X}$ is \emph{recursive} if there is a compatible metric $d$ on $\ca{X}$ such that $(\ca{X},d)$ is recursively presented \cf \cite[3B]{yiannis_dst}. 

A \emph{Polish} $G$-space is a triple $(\ca{X},G,\cdot)$ such that $\ca{X}$ is a Polish space, $G$ is a Polish group, and $\cdot : G \times \ca{X} \to \ca{X}$ is a continuous action on $\ca{X}$. By $E_G$ we always mean the induced orbit equivalence relation:
\[
x E_G y \iff (\exists g \in G)[ x = g \cdot y],
\]
where $x,y \in \ca{X}$. By $G \cdot x$ we mean the equivalence class or else the \emph{orbit} of $x$.

An equivalence relation $E$ on some Polish space \ca{X} has perfectly many classes if there is a non-empty perfect set $P \subseteq \ca{X}$ such that for all $x, y \in P$ with $x \neq y$ we have that $(x,y) \not \in E$.

The famous \emph{Topological Vaught Conjecture} states that for every Polish $G$-space $(\ca{X},G,\cdot)$ the orbit equivalence relation $E_G$ has either countably many or perfectly many classes.

A \emph{recursive Polish $G$-space} is a triple $(\ca{X},G, \cdot)$ with the following properties:
\begin{enumerate}
\item The sets \ca{X} and $G$ are recursive Polish spaces.
\item The set $G$ is a group and the function $(x,y) \mapsto xy^{-1}$ is recursive.
\item The function $\cdot: G \times \ca{X} \to \ca{X}$ is a group action, and is recursive;
\end{enumerate}
\cf \cite{becker_kechris_the_dst_of_Polish_group_actions}.

The preceding notions \emph{relativize} with respect to some parameter $\ep \in \baire$, e.g. we can talk about \emph{$\ep$-recursive} (or else \emph{recursive in $\ep$}) Polish $G$-spaces. In fact every Polish $G$-space is $\ep$-recursive for some suitable $\ep \in \cantor$.

The notion of a partial recursive function extends to recursive Polish spaces in a natural way \cf \cite[7B]{yiannis_dst}. We denote by $\rfn{e}^x$ the $e$-th partial $x$-recursive function on \om \ to \om, where $x$ belongs to some recursive Polish space \ca{X}. (The latter space should be clear from the context.)

By \ck \ we mean the least non-recursive ordinal and by $\ckr{x}$ the least non-$x$-recursive one. Given points $x,y$ in a recursive Polish space we write $x \hleq y$ for $x \in \del(y)$. The symbol $\tleq$ stands for Turing reducibility between members of $\cantor$.

\section{The characterization}

We can now state our recursive-theoretic characterization of the Topological Vaught Conjecture. For simplicity we state the result for recursive Polish $G$-spaces, but of course the analogous result holds also in the relativized case.

\begin{theorem}
\label{theorem characterization}
For every recursive Polish $G$-space $(\ca{X},G,\cdot)$ the following are equivalent.
\begin{enumerate}
\item The induced orbit equivalence relation $E_G$ does not have perfectly many classes.
\item For all $\alpha$ the set of orbits $\set{G \cdot x}{\ckr{(\alpha,x)} = \ckr{\alpha}}$ is countable.\footnote{The proof of the Silver Dichotomy Theorem is a standard application of the \emph{Gandy-Harrington} topology in order to produce perfectly many classes. In order to do so one utilizes the fact that the latter topology is Polish on all sets of the form \set{x}{\ckr{(\beta,x)}= \ckr{\beta}} \cf \cite{gao_invariant_dst}. Theorem \ref{theorem characterization} suggests that, in the case of Polish group actions, the preceding technique may fail to produce perfectly many classes, because the sets, on which the Gandy-Harrington topology is most useful, induce only countably many classes.}
\end{enumerate}\smallskip
In fact the implication $\tu{(}2\tu{)} \Longrightarrow \tu{(}1\tu{)}$ holds for arbitrary $\sig$ equivalence relations $E$ in recursive Polish spaces, \ie

if $\ca{X}$ is a recursive Polish space and $E$ is a $\sig$ equivalence relation on \ca{X}, for which the $\set{[x]_{E}}{\ckr{(\alpha,x)} = \ckr{\alpha}}$ is countable for all $\alpha \in \Baire$, then $E$ does not have perfectly many classes.
\end{theorem}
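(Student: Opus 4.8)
The plan is to prove the two implications by different methods; only $(1)\Rightarrow(2)$ will genuinely use the group action, while $(2)\Rightarrow(1)$ is established for an arbitrary \sig\ equivalence relation $E$, as in the displayed addendum. For the latter I argue by contraposition. Assuming $E$ has perfectly many classes, I fix a non-empty perfect set of pairwise $E$-inequivalent points and, as such a set contains a homeomorphic copy of \cantor, a continuous injection $f:\cantor\to\ca{X}$ whose values are pairwise $E$-inequivalent; let $\alpha$ be a parameter relative to which $f$ is recursive. Because \cantor\ is compact, $f$ is a homeomorphism onto its image, so $z\tleq(\alpha,f(z))$ uniformly and hence $\ckr{(\alpha,f(z))}=\ckr{(\alpha,z)}$ for every $z$. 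The only auxiliary fact needed is that, relative to $\alpha$, the set $\set{z}{\ckr{(\alpha,z)}>\ckr{\alpha}}$ is meager (a sufficiently Cohen-generic $z$ preserves $\ckr{\alpha}$), so that $\set{z}{\ckr{(\alpha,z)}=\ckr{\alpha}}$ is comeager and therefore contains a perfect set; fix such a perfect $Q\subseteq\cantor$. Then $\set{f(z)}{z\in Q}$ consists of pairwise $E$-inequivalent points each satisfying $\ckr{(\alpha,f(z))}=\ckr{\alpha}$, so $\set{[x]_E}{\ckr{(\alpha,x)}=\ckr{\alpha}}$ is uncountable, contradicting $(2)$ at this $\alpha$.

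For $(1)\Rightarrow(2)$ I argue by contraposition, and here the hypothesis that $E_G$ is an orbit equivalence relation is indispensable. Suppose $(2)$ fails, so that for some $\alpha$ the family $\set{G\cdot x}{\ckr{(\alpha,x)}=\ckr{\alpha}}$ is uncountable; relativising to $\alpha$ I may take $\alpha$ recursive and work with $A:=\set{x}{\ckr{x}=\ck}$, which meets uncountably many orbits. The goal is a non-empty perfect set of pairwise $E_G$-inequivalent points, contradicting $(1)$. Two structural inputs drive the construction. First, $(A,\tau)$ is a Polish space, where $\tau$ is the Gandy--Harrington topology generated by the lightface \sig\ sets; this is the fact recorded in the footnote, with the relevant argument in \cite{gao_invariant_dst}. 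Second---and this is where the abstract's theme that orbits are Borel enters---an \emph{effective} form of the Becker--Kechris/Effros orbit theorem supplies, uniformly in $x$, a \del-in-$x$ Borel code for $G\cdot x$ of rank below \ckr{x}; for $x\in A$ this bounds the rank below \ck\ and, crucially, permits me to separate inequivalent points by \emph{$E_G$-apart} \sig\ sets, i.e.\ given $x,y\in A$ with $\neg(x\,E_G\,y)$ to find disjoint \sig\ neighbourhoods $U\ni x$, $V\ni y$ whose $E_G$-saturations remain disjoint (the saturations being \sig\ because $E_G$ is, and disjoint because the underlying Borel orbits are).

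Granting these inputs, the perfect set is produced by a standard Gandy--Harrington fusion. I first discard the $\tau$-open set $N$ of points having a $\tau$-neighbourhood that meets only countably many orbits; as $(A,\tau)$ is Polish, hence Lindel\"of, $N$ itself meets only countably many orbits, so if $A=N$ then $(2)$ in fact held and there is nothing to refute. Otherwise $A\setminus N$ is a non-empty $\tau$-closed, hence $\tau$-Polish, set in which every $\tau$-neighbourhood meets uncountably many orbits. I then build a Cantor scheme $\set{U_s}{s\in 2^{<\om}}$ of non-empty \sig\ subsets of $A\setminus N$, with $U_{\cn{s}{i}}\subseteq U_s$ and vanishing $\tau$-diameter, choosing at each node two points in distinct orbits and separating them so that $U_{\cn{s}{0}}$ and $U_{\cn{s}{1}}$ are $E_G$-apart by the previous paragraph. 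Completeness of $\tau$ makes each branch converge to a point of $A$, and the branch points are pairwise $E_G$-inequivalent.

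The main obstacle is precisely the $E_G$-apart separation underlying the fusion, and it is here that the group action is essential: for a general \sig\ equivalence relation the classes need not be Borel, so inequivalent points cannot in general be enclosed in \sig\ sets with disjoint saturations and the fusion collapses---consistently with the introduction's remark that only $(2)\Rightarrow(1)$ is known in that generality. A secondary point, handled by the usual Gandy--Harrington bookkeeping, is to run the scheme with control of the \emph{original} diameters as well, so that the branches converge in the Polish topology of \ca{X} and the resulting set is perfect there and not merely in $\tau$; the uniform rank bound below \ck\ is what keeps every orbit accessible through lightface \sig\ data, ensuring that $\tau$ is the appropriate topology throughout the argument.
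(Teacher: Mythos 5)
Your argument for \tu{(}2\tu{)} $\Rightarrow$ \tu{(}1\tu{)} is correct and is essentially the paper's own: there too one takes an $\alpha$-recursive injection $\pi: \cantor \rightarrowtail \ca{X}$ into the putative perfect set, observes that $\set{z \in \cantor}{\ckr{(\alpha,\pi(z))} = \ckr{\alpha}}$ is a $\sig(\alpha)$ set containing every $\del(\alpha)$ point of $\cantor$ and hence comeager, and concludes. The differences are cosmetic: the paper finishes by counting (an uncountable set cannot map injectively into a countable family of classes) rather than by extracting a perfect subset, and it composes with $\pi$ inside the definition of the set, so your appeal to compactness of $\cantor$ to invert $f$ is not needed.

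The direction \tu{(}1\tu{)} $\Rightarrow$ \tu{(}2\tu{)} is where your proposal has a genuine gap, located exactly at the step you call the main obstacle: the $E_G$-apart separation. You need, for inequivalent $x,y \in \M(\emptyset)$, \emph{lightface} $\sig$ sets $U \ni x$, $V \ni y$ with disjoint $E_G$-saturations, and your justification --- the saturations are $\sig$ because $E_G$ is, and disjoint ``because the underlying Borel orbits are'' --- is a non sequitur: the saturation of a neighbourhood $U$ of $x$ contains the orbit of \emph{every} point of $U$, so disjointness of $G \cdot x$ and $G \cdot y$ by itself gives nothing; choosing $U,V$ so that the saturations stay disjoint is the entire content of the lemma. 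In Harrington's proof of Silver's dichotomy this is precisely the step that uses $\pii$-ness of the relation (via reflection), and $E_G$ is only $\sig$. The structural input you invoke to compensate is false as stated: Becker's theorem (Proposition \ref{proposition hyperjump}) makes $G \cdot x$ a $\del(\W^x)$ set --- hyperarithmetic in the \emph{hyperjump} of $x$, not in $x$ --- and Sami's bound places $G \cdot x$ at level $\ckr{G \cdot x}+2$, which for $x \in \M(\emptyset)$ can equal $\ck+2$, not below $\ck$. (The Harrison linear ordering, a recursive point of $\M(\emptyset)$ in the logic action whose Scott rank exceeds $\ck$, witnesses that orbits of points of $\M(\emptyset)$ need not be $\del$ in those points.) So orbits over $\M(\emptyset)$ are not accessible through lightface $\sig$ data, and there is no reason why inequivalent points of $\M(\emptyset)$ should be separable by disjoint invariant lightface $\sig$ sets; without that, the fusion never gets off the ground.

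The level at which such separation does become available is $\sig(\W^{\alpha})$: by the paper's Proposition \ref{proposition Polish group actions} (derived from Proposition \ref{proposition hyperjump} and Corollary \ref{corollary from Spectors WA is hyp in B}), the set $E_G \cap \left(\M(\alpha) \times \ca{X}\right)$ is $\pii(\W^{\alpha})$ as well as $\sig$, hence Borel. Running your fusion in the Gandy--Harrington topology relativized to $\W^{\alpha}$ would then just be a re-proof of Harrington's argument for a Borel relation relative to $\W^{\alpha}$. The paper does exactly this, but as a black box: if \tu{(}2\tu{)} fails at $\alpha$, then $E_G \cap \left(\M(\alpha) \times \M(\alpha)\right)$ is a Borel equivalence relation on the Borel set $\M(\alpha)$ with uncountably many classes, and Silver's dichotomy yields a perfect set of pairwise inequivalent points. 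Your plan aims at a true statement, but its driving lemma is unproved, rests on an incorrect complexity claim, and once the complexity is corrected the argument collapses into the paper's Proposition-plus-Silver route.
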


Before proceeding to the proof we find it useful to discuss some well-known facts. The countable linear orderings are encoded by members of the Baire space in a natural way. For every $\alpha \in \baire$ we define $\leq_\alpha \subseteq \om \times \om$ as follows
\begin{align*}
{\rm Field}(\alpha) \ &=  \set{n \in \om}{\alpha(\langle n,n \rangle) = 1}\\
n \leq_\alpha m \ &\iff n,m \in {\rm Field}(\alpha) \ \& \ \alpha (\langle n, m \rangle) = 1.
\end{align*}
The set $\LO$ of \emph{codes} of countable \emph{linear orderings} is defined as follows
\[
\LO = \set{\alpha \in \baire}{\leq_\alpha \ \text{is a linear ordering on} \ {\rm Field}(\alpha)}.
\]
The set of codes of \emph{well-orderings} is
\[
\WO = \set{\alpha \in \LO}{\leq_\alpha \ \text{is a well-ordering on} \ {\rm Field}(\alpha)}.
\]
We also put
\[
\M^\ca{Y}(x) \equiv \M(x) = \set{y \in \ca{Y}}{\ckr{(x,y)} = \ckr{x}},
\]
where $\ca{X}$, $\ca{Y}$ are recursive Polish spaces. It is well-known that $\M^\ca{Y}(x)$ is a Borel and $\sig(x)$ subset of $\ca{Y}$, see \cite{gao_invariant_dst}. As it was proved by Spector \cf \cite{spector_recursive_well-orderings} the set $\M^\ca{Y}(x)$ contains all points in $\del(x)$, and so from the Thomasson-Hinnman Theorem \cf \cite{thomason_the_forcing_method_and_the_upper_lattice_of_hyperdegrees,hinman_some_applications_of_forcing_to_hierarchy_problems_in_arithmetic} the set $\M^\ca{Y}(x)$ is comeager.

The \emph{hyperjump $\W^x$ of $x \in \ca{X}$} is defined by
\[
\W^x = \set{e \in \om}{\rfn{e}^x \ \text{is total and in}\ \WO}.
\]
It is a well-known result of Spector that $\ckr{(x,y)} = \ckr{x}$ if and only if $\W^x \not \hleq y$. So the second assertion of Theorem \ref{theorem characterization} essentially says that for all $\alpha$ the set of orbits in the hypercone with basis $\W^{\alpha}$ is co-countable.

\begin{remark}\normalfont
We do not know if the direct implication of Theorem \ref{theorem characterization} extends to all analytic equivalence relations. The standard example of an analytic equivalence relation with uncountably many but not perfectly many classes does satisfy the second assertion of Theorem \ref{theorem characterization}:

Given $x, y \in \LO$ we define
\begin{align*}
x E y 
\iff& [x, y \not \in \WO] \ \text{or}  \ [\text{$\leq_x$, $\leq_y$ are isomorphic}].
\end{align*}
Clearly the preceding $E$ is a $\sig$ equivalence relation on $\LO$ with uncountably many classes. Moreover it is not hard to verify that it does not have perfectly many classes, since for any non-empty perfect $P \subseteq \LO$ as in the definition of  ``perfectly many'' we would be able to find some non-empty perfect $P' \subseteq P$ with $P'  \subseteq \WO$. This would imply:
\[
y \in \WO \iff y \in \LO \ \& \ (\exists x \in P')[\text{$\leq_y$ embeds in $\leq_x$}].
\]
The latter would imply that $\WO$ is a $\sig$ set, a  contradiction.

Now given $\alpha \in \baire$ the set $\ckr{\alpha}$ is countable, so there is a sequence $(y^\alpha_n)_{\n}$ of $\alpha$-recursive well-orderings such that for each $\xi < \ckr{\alpha}$ it holds $\xi = |y^\alpha_n|$ for some \n. It follows easily that
\[
\set{[x]_E}{x \in \WO \ \& \ \ckr{(\alpha,x)} = \ckr{\alpha}} = \set{[y^\alpha_n]_E}{\n}.
\]
Since there is only one class $[z]_E$ for $z \not \in \WO$ it follows that the set $\set{[x]_E}{\ckr{(\alpha,x)} = \ckr{\alpha}}$ is countable for all $\alpha \in \baire$, and hence the equivalence relation $E$ satisfies the second assertion of Theorem \ref{theorem characterization} too.

One more example of a $\sig$ equivalence relation with uncountably many but not perfectly many classes is
\[
x F y \iff \ckr{x} = \ckr{y},
\]
where $x,y \in \cantor$. 

One way to see the latter is by applying the main result of \cite{montalban_analytics_equivalence_relations_satisfying_HYP_is_recursive}, see Remark \ref{remark Montalbans result} below. Given $\alpha \in \baire$, using again that $\ckr{\alpha}$ is  countable, we choose a sequence $(x_n)_{\n}$ in $\cantor$ such that 
\[
\set{\ckr{x}}{\ckr{x} \leq \ckr{\alpha}, \ x \in \cantor} = \set{\ckr{x_n}}{\n}.
\]
Then for every $x \in \cantor$ with $\ckr{(\alpha,x)} = \ckr{\alpha}$, we have in particular that $\ckr{x} \leq \ckr{\alpha}$ and so there is some $n$ such that $\ckr{x} = \ckr{x_n}$, \ie $x F x_n$. Thus the set of classes $\set{[x]_F}{\ckr{(\alpha,x)} = \ckr{\alpha}}$ is $\set{[x_n]_F}{\n}$.

Hence the relation $F$ satisfies the second assertion of Theorem \ref{theorem characterization} as well.
\end{remark}\bigskip

As it is well-known \cf \cite[Theorem 7.3.1]{becker_kechris_the_dst_of_Polish_group_actions} in a Polish $G$-space one can decompose the domain \ca{X} into an $\omega_1$-sequence $(A_\xi)_{\xi < \omega_1}$ of Borel sets such that each restriction $E_G \cap (A_\xi \times A_\xi)$ is a Borel set. Moreover the $A_\xi$'s can be chosen to be $E_G$-invariant. We provide the following related result.

\begin{proposition}
\label{proposition Polish group actions}
For every recursive Polish $G$-space $(\ca{X},G,\cdot)$ and for all $\alpha \in \Baire$ the sets $E_G \cap \left(\M(\alpha) \times \ca{X}\right)$ and $E_G \cap \left(\ca{X} \times \M^\ca{X}(\alpha)\right)$ are Borel.
\end{proposition}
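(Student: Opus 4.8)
The plan is to reduce to a single side and then feed the membership condition $x \in \M(\alpha)$ into an effective, uniform version of the Scott--Vaught orbit analysis underlying the Becker--Kechris decomposition recalled just above. First, since $E_G$ is symmetric, the homeomorphism $(x,y) \mapsto (y,x)$ carries $E_G \cap (\M(\alpha) \times \ca{X})$ onto $E_G \cap (\ca{X} \times \M^\ca{X}(\alpha))$, and homeomorphisms preserve the Borel pointclasses; so it suffices to treat the first set. Note that $E_G$ is $\sig$ while $\M(\alpha)$ is Borel, so $E_G \cap (\M(\alpha) \times \ca{X})$ is a priori only $\sig(\alpha)$, and the task is to supply a matching Borel (equivalently $\del$ in a suitable real) description on the strip $\M(\alpha) \times \ca{X}$.

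Next I would invoke the transfinite orbit analysis behind the cited decomposition: there is a decreasing sequence of relations $\equiv_\xi$, obtained from the Vaught transforms of the action, together with an increasing sequence of $E_G$-invariant Borel sets $(A_\xi)_{\xi < \om_1}$ exhausting $\ca{X}$, enjoying two features. (i) Both are Borel uniformly in an ordinal code, in the sense that for $w \in \WO$ the relation $\set{(x,y)}{x \equiv_{|w|} y}$ and the set $A_{|w|}$ are $\del(w)$. (ii) For every $x$ there is a countable stage $\xi_x$ (the orbit Scott rank of $x$) at which the analysis closes off: $x \in A_{\xi_x}$ and $G \cdot x = \set{y}{x \equiv_\xi y}$ for every $\xi \geq \xi_x$. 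This is exactly the effective content of ``orbits are Borel'' that the note revisits.

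The crux is a rank bound: I claim that $x \in \M(\alpha)$ caps $\xi_x$ below a single countable ordinal depending only on $\alpha$. The whole analysis is $\alpha$-recursive, so for a fixed $x$ the relations $\equiv_\xi$ with $\xi < \ckr{(\alpha,x)}$ are uniformly definable over the admissible set $L_{\ckr{(\alpha,x)}}[\alpha,x]$, and a Nadel-type reflection then shows $\xi_x \leq \ckr{(\alpha,x)} + 1$. For $x \in \M(\alpha)$ we have $\ckr{(\alpha,x)} = \ckr{\alpha}$ by definition, so $\xi_x \leq \ckr{\alpha} + 1$ for all such $x$ simultaneously. Since $\ckr{\alpha} + \om < \ckr{\W^\alpha}$, I may fix once and for all a $\W^\alpha$-recursive $w^* \in \WO$ with $|w^*| = \theta := \ckr{\alpha} + \om \geq \xi_x$ for every $x \in \M(\alpha)$. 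Establishing this reflection bound---that membership in $\M(\alpha)$ genuinely pins the orbit Scott rank near $\ckr{\alpha}$---is the step I expect to demand the most care.

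With $w^*$ in hand the conclusion is immediate. For $x \in \M(\alpha)$ and any $y$, feature (ii) gives $x E_G y \iff x \equiv_{|w^*|} y$, while feature (i) makes $\set{(x,y)}{x \equiv_{|w^*|} y}$ a $\del(\W^\alpha)$, hence Borel, set; intersecting with the Borel set $\M(\alpha) \times \ca{X}$ yields
\[
E_G \cap (\M(\alpha) \times \ca{X}) = (\M(\alpha) \times \ca{X}) \cap \set{(x,y)}{x \equiv_{|w^*|} y},
\]
which is Borel. Equivalently, in terms of the invariant levels, $\M(\alpha) \subseteq A_\theta$, so by $E_G$-invariance $E_G \cap (\M(\alpha) \times \ca{X}) = (\M(\alpha) \times A_\theta) \cap \bigl(E_G \cap (A_\theta \times A_\theta)\bigr)$, again a Borel set. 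The claim for $E_G \cap (\ca{X} \times \M^\ca{X}(\alpha))$ then follows from the symmetry noted at the outset.
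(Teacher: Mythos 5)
Your argument is correct in outline, but it takes a genuinely different route from the paper's. The paper derives the proposition from Proposition \ref{proposition hyperjump} (Becker's refinement: $x E_G y$ exactly when some $g \in \del(\W^{x},y)$ carries $x$ to $y$) together with the Spector-type absorption $\W^{(\alpha,x)} \hleq (\W^{\alpha},x)$ for $x \in \M(\alpha)$ (Corollary \ref{corollary from Spectors WA is hyp in B}); on the strip $\M(\alpha) \times \ca{X}$ this converts the $\sig$ definition of $E_G$ into a $\pii(\W^{\alpha})$ one, and Souslin's theorem finishes. You instead run the transfinite orbit analysis and cap the orbit rank of every $x \in \M(\alpha)$ below $\ckr{\alpha}+\om$. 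Both proofs turn on the same phenomenon --- an a priori unbounded countable invariant of $x$ (for the paper, the complexity of the canonical witness $g$; for you, the closure ordinal of the back-and-forth analysis) collapses to the $\alpha$-level once $\ckr{(\alpha,x)} = \ckr{\alpha}$ --- but your route imports heavier machinery: the uniform-in-codes Borelness of the relations $\equiv_{\xi}$ and the invariant levels $A_\xi$, the asymmetric closure $G \cdot x = \set{y}{x \equiv_{\xi} y}$ for $\xi$ past the rank of $x$, and above all the rank bound itself. That bound, which you rightly flag as the crux, is exactly Sami's result (i) as quoted in the paper: every orbit is $\boldp^0_{\ckr{G \cdot x,\alpha}+2}$, and $\ckr{G \cdot x,\alpha} \leq \ckr{(\alpha,x)} = \ckr{\alpha}$ for $x \in \M(\alpha)$, so it is citable rather than in need of a fresh reflection argument; note also that your appeal to the $E_G$-invariance of $A_\theta$ is what legitimately upgrades a bound on individual orbits to Borelness of the full strip $\M(\alpha) \times \ca{X}$. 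What your approach buys is a sharper conclusion --- an explicit Borel level near $\ckr{\alpha}+2$ for the restricted relation, in the spirit of the paper's footnote combining Sami's theorem with Louveau separation --- whereas the paper's argument is shorter and self-contained given Proposition \ref{proposition hyperjump}, needing only Souslin's theorem at the end.
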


The proof of the proposition above will be given in the sequel. This result is also related to a result of Sami. To explain this better, we set first
\[
\ckr{G \cdot x,\ep} = \min \set{\ckr{(g \cdot x,\ep)}}{g \in G}
\]
where $(\ca{X},G, \cdot)$ is \ep-recursive. Sami proved that in every \ep-recursive Polish $G$-space $(\ca{X},G,\cdot)$ it holds: (i) every orbit $G \cdot x$ is a $\boldp^0_{\ckr{G \cdot x,\ep} + 2}$ set; (ii) if there exists some $\xi < \om_1$ such that every orbit is $G \cdot x$ is a $\boldp^0_\xi$ set, then $E_G$ is Borel; and therefore (iii) if $\ckr{G \cdot x,\ep} = \ckr{\ep}$ for all $x \in \ca{X}$ then $E_G$ is a Borel equivalence relation. 

Proposition \ref{proposition Polish group actions} gives another proof of the preceding statement (iii). To see this assume that for all $x \in \ca{X}$ it holds $\ckr{G \cdot x,\ep} = \ckr{\ep}$, \ie there is some $z \in \M(\ep)$ such that $z E_G x$. Then for all $x,y \in \ca{X}$ we have
\begin{align*}
x E_G y \iff& (\forall z \in \M(\ep))[x E_G z \ \longrightarrow \ z E_G y]\\
\iff& (\forall z)[z \not \in \M(\ep) \ \vee \ (x,z) \not \in E_G \ \vee \ (z,y) \in E_G \cap \left(\M(\ep) \times \ca{X}\right)].
\end{align*}
Using Proposition \ref{proposition Polish group actions} it follows that $E_G$ is coanalytic and therefore it is moreover a Borel subet of $\ca{X} \times \ca{X}$.\smallskip

Our characterization is proved with the help of the preceding proposition.
  
\begin{proof}[\textit{Proof of Theorem \ref{theorem characterization}}]
For the left-to-right-hand direction, given $\alpha \in \baire$ we consider the restriction $F:=E_G \cap \left(\M^\ca{X}(\alpha) \times \M^\ca{X}(\alpha)\right)$. Then $F$ is a Borel equivalence relation on the Borel set $\M^\ca{X}(\alpha)$. 

If the conclusion were not true then $F$ would have uncountably many equivalence classes and so from Silver's Dichotomy \cite{silver_counting_the_number_of_equivalence_classes_coanalytic_equivalence_relations} there would be some non-empty perfect set $P \subseteq \M^\ca{X}(\alpha)$ such that for all $x,y \in P$ with $x \neq y$ it holds $(x,y) \not \in E_G$. In particular $E_G$ would have perfectly many classes, a contradiction.

For the converse direction, consider some $\alpha$-recursive injection $\pi: \cantor \rightarrowtail \ca{X}$. It is enough to show that for some $z \neq w$ in $\cantor$ we have that $\pi(z) E_G \pi(w)$. The set
\[
A = \set{z \in \cantor}{\ckr{(\alpha,\pi(z))} = \ckr{\alpha}}
\]
is easily a $\sig(\alpha)$ subset of $\cantor$. Moreover it contains all points in $\del(\alpha)$ and so $A$ is comeager.  In particular $A$ is an uncountable set. From our hypothesis the set of classes $B = \set{[x]_{E_G}}{\ckr{(\alpha,x)} = \ckr{\alpha}}$ is countable. Clearly the function $z \mapsto [\pi(z)]_{E_G}$ carries $A$ inside $B$. Since $B$ is a countable set and $A$ is an uncountable one, it follows that the latter function cannot be one-to-one on $A$, \ie there are $z \neq w$ in $A$ such that $[\pi(z)]_{E_G} = [\pi(w)]_{E_G}$. In other words $\pi(z) E_G \pi(w)$.

The proof of the latter direction when we have an arbitrary $\sig(\ep)$ equivalence relation $E$ in an $\ep$-recursive Polish space \ca{X} is exactly the same.
\end{proof}

\begin{remark}\normalfont
\label{remark Montalbans result}
(a) Montalban \cite{montalban_analytics_equivalence_relations_satisfying_HYP_is_recursive} proved that under the axiom of $\bolds^1_1$-determinacy an analytic equivalence relation $E$ on $\cantor$ does not have perfectly many classes exactly when there is some $\ep \in \baire$ such that for all $\alpha \tgeq \ep$ every $x \hleq \alpha$ is $E$-equivalent to some $y \tleq \alpha$ (the latter property is called \emph{HYP-is-recursive on a cone}). Moreover he showed that the converse direction is in fact provable in {\bf ZF}.

The argument that we used to prove the converse direction of Theorem \ref{theorem characterization}, provides also a somewhat shorter (although with less information) proof    of the converse direction of Montalban's preceding result. To see this assume that $E$ satisfies that HYP-is-recursive on the cone with basis $\ep$ and let $\pi: \cantor \to \cantor$ be an $\alpha$-recursive injection with $\alpha \tgeq \ep$ and $E$ is $\sig(\alpha)$. Consider the set
\[
\mathcal{A} = \set{z \in \cantor}{(\exists y \tleq \alpha)[(\pi(z),y) \in E]}.
\]
Clearly $\mathcal{A}$ is a $\sig(\alpha)$ subset of $\cantor$, and from our hypothesis it contains all points $z \in \cantor$ with $z \hleq \alpha$. Hence $\ca{A}$ is comeager. On the other hand $\ca{A} = \cup_e \ca{A}_e$, where
\[
\ca{A}_e = \set{z \in \cantor}{\rfn{e}^\alpha \ \text{is total and} \ (\pi(z),\rfn{e}^\alpha) \in E}.
\]
Hence for some $e$ the set $\ca{A}_e$ is non-meager and in particular it contains two distinct points $z \neq w$. We then have $\pi(z) \ E \ \rfn{e}^\alpha \ E \ \pi(w)$. Hence $E$ cannot have perfectly many classes.\smallskip

(b) It is clear from Montalban's characterization and Theorem \ref{theorem characterization} that for orbit equivalence relations the condition ``HYP-is-recursive on a cone'' implies (in {\bf ZF}) condition (b) of the latter theorem. It would be interesting to see if there is a \emph{direct} proof of this fact, which may also work for arbitrary analytic equivalence relations.
\end{remark}

\subsection*{Orbits are Borel sets.}
It is a known result of D. E. Miller \cite[Theorem $2'$]{miller_douglas_on_the_measurability_of_orbits_in_Borel_actions} that orbits of Borel actions of Polish groups are Borel sets. Sami's result (i) that we mentioned above is a refinement of the latter fact. Another such refinement is given by Becker \cite{becker_the_topological_TVC_and_minimal_counterexamples}, from where it follows that every orbit $G \cdot x$ in a recursive Polish $G$-space is a $\del(\W^x)$ set. Although not explicitly mentioned by Becker, the following fact is immediate from his proof.

\begin{proposition}[\cf \cite{becker_the_topological_TVC_and_minimal_counterexamples}]
\label{proposition hyperjump}
For every recursive Polish $G$-space $(\ca{X},G,\cdot)$ we have for all  $x,y \in \ca{X}$ that
\[
x E_G y \iff (\exists g \in \del(\W^{x},y))[y = g \cdot x].
\]
In particular every orbit $G \cdot x$ is a $\del(\W^{x})$ set.\footnote{Notice that from the Kleene Basis Theorem we can always find such a $g$ in $\del(\W^{(x,y)})$. The merit of this result is that we can relax the hyperjump on one of the variables. Moreover by combining these two type of refinements (Sami' s result  and Proposition \ref{proposition hyperjump}) with Louveau Separation \cite{louveau_a_separation_theorem_for_sigma_sets} it follows that every orbit $G \cdot x$ in a recursive Polish space is lightface $\Pi^0_{\ckr{G \cdot x}+2}(\beta)$ set, for some $\beta \hleq \W^{x}$.}
\end{proposition}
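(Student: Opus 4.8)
The plan is to prove the forward implication (the reverse is immediate: any $g$ whatsoever with $y = g\cdot x$ witnesses $x E_G y$) and then to read off the ``in particular'' clause by a soft argument. So fix $x,y\in\ca{X}$ with $x E_G y$ and consider
\[
S = \set{g\in G}{g\cdot x = y}.
\]
Since the action is recursive, the map $g\mapsto g\cdot x$ is recursive in $x$, so $S$ is the preimage of the closed set $\{y\}$ and is therefore a nonempty closed, hence $\Pi^0_1(x,y)$, subset of $G$; as $x\tleq\W^{x}$ it is even $\Pi^0_1(\W^{x},y)$. The idea is to select a member of $S$ by a leftmost-branch construction through the recursive presentation of $G$: one builds a nested sequence of basic neighborhoods $N_{s_0}\supseteq N_{s_1}\supseteq\cdots$ of shrinking diameter, at each stage choosing the least \emph{extendible} refinement, i.e.\ the least $s$ with $N_s\cap S\neq\emptyset$ and diameter $<2^{-n}$. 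Such a refinement always exists because $S$ is nonempty, and since $S$ is closed the limit point $g$ lies in $S$. Crucially, $g$ is recursive in any oracle deciding the extendibility predicate $\set{s}{N_s\cap S\neq\emptyset}$.

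The entire difficulty is thus the complexity of extendibility. A priori ``$N_s\cap S\neq\emptyset$'' unfolds as ``$(\exists g\in N_s)[g\cdot x=y]$'', a $\sig(x,y)$ predicate, and deciding it costs the full hyperjump $\W^{(x,y)}$; this is precisely the Kleene Basis bound recorded in the footnote. To relax the hyperjump on $y$ I would appeal to Becker's analysis. His proof that orbits are $\del(\W^{x})$ proceeds by an effective transfinite recursion whose ordinal stages are bounded below $\ckr{\W^{x}}$, and this recursion produces $\del(\W^{x})$ codes not merely for the full orbit but, uniformly in the neighborhood code $s$, for the localized orbit images $\set{y}{N_s\cap S\neq\emptyset}=N_s\cdot x$. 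Granting this uniformity, the extendibility predicate, for our fixed $y$, is the membership of $y$ in a $\del(\W^{x})$ set and is therefore $\del(\W^{x},y)$. Feeding this oracle into the leftmost-branch construction yields some $g\hleq(\W^{x},y)$ with $g\cdot x=y$, which is the forward implication.

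For the ``in particular'' clause the pointwise statement already gives $G\cdot x=\set{y}{(\exists g\in\del(\W^{x},y))[g\cdot x=y]}$. The matrix $g\cdot x=y$ is $\Pi^0_1(x,y)$, hence arithmetical in $(\W^{x},y)$, so by the Spector--Gandy Theorem relativized to the oracle $\W^{x}$ this representation exhibits $G\cdot x$ as a $\pii(\W^{x})$ set. On the other hand $G\cdot x$ is plainly $\sig(x)\subseteq\sig(\W^{x})$, being the recursive-in-$x$ continuous image of $G$. Combining the two, $G\cdot x$ is $\del(\W^{x})$.

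I expect the main obstacle to be exactly the uniform $\del(\W^{x})$-bound on the localized images $N_s\cdot x$ used above, equivalently the bound $\ckr{\W^{x}}$ on the rank of Becker's transfinite recursion: this is the genuinely new content over the naive Kleene Basis argument and is the part that the proposition asserts is ``immediate from Becker's proof.'' The surrounding steps---the leftmost-branch selection through the closed set $S$ and the Spector--Gandy absorption of the hyperarithmetic quantifier---should be routine once that bound is in hand.
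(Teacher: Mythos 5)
Your outline locates the difficulty correctly but does not resolve it, so there is a genuine gap at the central step. You reduce the whole proposition to deciding the extendibility predicate $N_s\cap S\neq\emptyset$, i.e.\ $y\in N_s\cdot x$, from an oracle of the form $(\W^{x},y)$, and you discharge this by asserting that Becker's transfinite recursion produces, uniformly in $s$, $\del(\W^{x})$ codes for the localized images $N_s\cdot x$. That assertion is a \emph{strengthening} of the proposition you are trying to prove (a localized, uniform version of ``$G\cdot x$ is $\del(\W^{x})$''); it is not established anywhere in your argument, it does not follow from the statement of the proposition itself (the canonical $g$ witnessing $y\in G\cdot x$ need not lie in $N_s$), and it is not what Becker's proof or the paper's sketch actually supplies. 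As written, the proposal defers the entire content of the result to an unproved black box that is at least as strong as the result.

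The idea your proposal is missing is the reduction to the \emph{stabilizer}. The set $S=\set{g}{g\cdot x=y}$ is a left coset $gG_x$, and the paper shows (Step 3) that the predicate $gG_x\cap U_s\neq\emptyset$ is arithmetical in $(\W^{x},x,g,s)$ \emph{with $g$ a free variable}: by goodness of the Souslin scheme and continuity of the action, $gG_x\cap U_s\neq\emptyset$ holds iff some basic neighborhood $N(G,k)$ meets $G_x$ and is pushed by $g$ inside some $\overline{U_{\cn{s}{i}}}\subseteq U_s$, and ``$N(G,k)$ meets $G_x$'' is a $\sig(x)$ question about a closed set depending only on $x$, so the Kleene Basis Theorem yields a witness recursive in $\W^{x}$. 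The variable $y$ plays no role in this computation; it enters only at the end through the single closed condition $y=g\cdot x$, and the desired $g$ is then the unique member of the arithmetical-in-$(\W^{x},y)$ singleton $\set{g}{\delta(gG_x)=g \ \& \ y=g\cdot x}$, hence $\del(\W^{x},y)$. Note that this singleton device is also what lets one avoid knowing a coset representative of $S$ in advance, which your direct leftmost-branch-through-$T_S$ construction would require. Your treatment of the reverse implication and of the ``in particular'' clause (absorbing the $\del(\W^{x},y)$ quantifier into a $\pii(\W^{x})$ definition and applying Souslin's theorem against the obvious $\sig(x)$ bound) is fine.
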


To see how the latter proposition follows from Becker's arguments, we go to the proof of \cite[Lemma 3.5]{becker_the_topological_TVC_and_minimal_counterexamples} and we notice that the unique $g \in G$, which satisfies that $g \in K$ and $g \cdot y = z$ is a $\del(T_H,y,z)$ point. This is because the set $K$ is $\del(T_H)$ (as Becker remarks in order to check this one has to review an earlier result of Dixmier \cite{dixmier_dual_et_quasi_dual_dune_algebre_de_Banach_involutive}) and therefore the preceding $g$ is a member of a $\del(T_H,y,z)$ singleton. Since $T_H$ is a $\sig(y)$ subset of the naturals we have that $T_H \tleq W^y$. Hence $g \in \del(W^y,z)$.

We find it useful to provide a more detailed sketch of the proof of Proposition \ref{proposition hyperjump}, where the effective arguments are somewhat easier to follow. But before we do this we show that Proposition \ref{proposition Polish group actions}, which we used to prove our characterization, follows from Proposition \ref{proposition hyperjump}. 

Spector \cf \cite{spector_recursive_well-orderings} proved that for all $x,y \in \cantor$ the following hold: (a) $\W^x \hleq y$ implies $\ckr{x} < \ckr{y}$; (b) $\ckr{x} < \ckr{y}$ and $x \hleq y$ implies $\W^x \hleq y$. It is then an easy corollary that if $\ckr{(x, y)} = \ckr{x}$ then $\W^{(x, y)} \hleq (\W^x,y)$, for all $x, y \in \cantor$. In particular if $\ckr{x} = \ck$ then $\W^x \hleq ( \W,x )$ for all $x \in \cantor$. To see this assume that $\ckr{(x, y)} = \ckr{x}$ and apply (b) of the preceding result of Spector with $x'= x \oplus y := (x(0),y(0),x(1),y(1),\dots)$ and $y'=\W^x \oplus y$.\footnote{Although these results were initially given for members of the Cantor space, they hold also in recursive Polish spaces, with very mild modifications. For example we exchange $x \oplus y$ with the pair $(x,y)$ and we consider the Polish space $\ca{X} \times \ca{X}$.} 
 
It is well-known that if $T$ is a perfect tree which is generic in the sense of Sacks forcing then $\W^T \hleq (\W,T)$. In particular the latter relation holds for almost all $T$. Summing up we have the following.

\begin{corollary}[see also 3.13 in \cite{sacks_higher_recursion_theory}]
\label{corollary from Spectors WA is hyp in B}
For every recursive Polish space \ca{X}, all $\alpha \in \baire$, and all $x \in \ca{X}$ with $\ckr{(x,\alpha)} = \ckr{\alpha}$ \tu{(}in particular for almost all $x \in \ca{X}$\tu{)} we have that
\[
\W^{(\alpha,x)} \hleq (\W^{\alpha},x).
\]
\end{corollary}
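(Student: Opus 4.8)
The plan is to derive the corollary directly from Spector's results (a) and (b) recalled above, together with the comeagerness of $\M^\ca{X}(\alpha)$. The substance is the Cantor-space statement: \emph{if $\ckr{(x,y)} = \ckr{x}$ then $\W^{(x,y)} \hleq (\W^x,y)$}; the corollary is then its relativization, with $\alpha$ playing the role of $x$ and the variable $x$ that of $y$.

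To prove the Cantor-space statement I would put $x' = x \oplus y$ and $y' = \W^x \oplus y$. Since $x'$ is inter-recursive with the pair $(x,y)$ we have $\W^{x'} = \W^{(x,y)}$, so it suffices to show $\W^{x'} \hleq y'$, and for this I would invoke Spector's direction (b). Its two hypotheses are checked as follows. First, $x' \hleq y'$, because $y \hleq y'$ and $x \hleq \W^x \hleq y'$, whence $x \oplus y \hleq y'$. Second, for the strict ordinal inequality $\ckr{x'} < \ckr{y'}$, observe that $\W^x \hleq y'$ holds trivially, so Spector's direction (a) gives $\ckr{x} < \ckr{y'}$; combining this with the hypothesis $\ckr{x'} = \ckr{(x,y)} = \ckr{x}$ yields $\ckr{x'} < \ckr{y'}$. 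With both hypotheses in place, (b) delivers $\W^{x'} \hleq y'$, that is, $\W^{(x,y)} \hleq (\W^x,y)$.

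From here the corollary follows by the relativization conventions of the footnote above — replacing joins $\oplus$ by pairs and working in $\ca{X} \times \baire$ in place of $\cantor$ — applied with first coordinate $\alpha$ and second coordinate $x$, so that the hypothesis $\ckr{(x,\alpha)} = \ckr{\alpha}$ gives exactly $\W^{(\alpha,x)} \hleq (\W^\alpha,x)$. The parenthetical clause is then immediate: the set of admissible $x$ is $\M^\ca{X}(\alpha) = \set{x \in \ca{X}}{\ckr{(\alpha,x)} = \ckr{\alpha}}$, which is comeager as recalled earlier, so the inequality holds for comeager-many (``almost all'') $x$. The only genuinely load-bearing step is the verification of $\ckr{x'} < \ckr{y'}$, the single point where both the hypothesis $\ckr{(x,y)} = \ckr{x}$ and Spector's direction (a) are essential; everything else is bookkeeping about joins and the routine transfer from the Cantor space to arbitrary recursive Polish spaces.
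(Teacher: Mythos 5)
Your proof is correct and follows essentially the same route as the paper: the paper likewise applies Spector's direction (b) with $x' = x \oplus y$ and $y' = \W^x \oplus y$, obtaining the ordinal inequality $\ckr{x'} < \ckr{y'}$ from direction (a) together with the hypothesis $\ckr{(x,y)} = \ckr{x}$, and then relativizes to recursive Polish spaces by replacing joins with pairs. You have merely spelled out the verification of the two hypotheses of (b) that the paper leaves as "an easy corollary," and your treatment of the comeagerness clause matches the paper's as well.
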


\begin{remark}\normalfont
These results give a short effective proof that analytic sets have the Baire property. Let us see how. Suppose that $P \subseteq \ca{X}$ is (without loss of generality) $\sig$ and that $F \subseteq \cantor \times \baire$ is $\Pi^0_1$ such that
\[
P(x) \iff (\exists \beta \in \baire)F(x,\beta).
\]
Using the Kleene Basis Theorem \cf \cite{kleene_arithmetical_predicates_and_function_quantifiers} (see also \cite[4E.8]{yiannis_dst}) and Corollary \ref{corollary from Spectors WA is hyp in B} we have that for all $x \in \M \equiv \M(\emptyset)$,
\begin{align*}
P(x) \iff& \ (\exists \beta \in \del(\W^{x}))F(x,\beta)\\
       \iff& \ (\exists \beta \in \del(\W,x))F(x,\beta).
\end{align*}
The latter shows that the set $P$ is computed by a $\pii(\W)$ relation on $\M$. Since $\M$ is Borel it follows that the set $P \cap \M$ is both analytic and coanalytic, and so from Souslin's Theorem it is also Borel. Moreover the set $P \setminus \left(P \cap \M\right)$ is meager because $\M$ is comeager.
\end{remark}

We can now explain how to derive Proposition \ref{proposition Polish group actions} from Proposition \ref{proposition hyperjump}. Given a recursive $(\ca{X},G,\cdot)$, by applying the  Corollary \ref{corollary from Spectors WA is hyp in B} as above, we can see round-robin style that for all $\alpha$ and all $(x,y) \in \M(\alpha) \times \ca{X}$,
\begin{align*}
x E_G y 
\iff& \ (\exists g \in \del(\W^{x},y))[y = g \cdot x]\\
\iff& \ (\exists g \in \del(\W^{(\alpha,x)},y))[y = g \cdot x]\\
\iff& \ (\exists g \in \del(\W^{\alpha},x,y))[y = g \cdot x].
\end{align*}

Hence the set $E_G \cap \left(\M(\alpha) \times \ca{X}\right)$ is defined by a $\pii(\W^{\alpha})$-formula, and is in particular a coanalytic subset of $\ca{X} \times \ca{X}$. Since it is evidently an analytic set as well, it follows from the Souslin Theorem that the set $E_G \cap \left(\M(\alpha) \times \ca{X}\right)$ is Borel. The result for $E_G \cap \left(\ca{X} \times \M(\alpha)\right)$ is proved similarly using the fact that $E_G$ is symmetric.\bigskip

We conclude with a more detailed sketch of the proof of Proposition \ref{proposition hyperjump}. We fix a recursive Polish $G$-space $(\ca{X},G,\cdot)$. Recall that the \emph{stabilizer} $G_x$ of $x \in \ca{X}$ is the set $\set{g \in G}{g \cdot x = x}$. Given a Polish space \ca{Y}, by $F(\ca{Y})$ we mean the set of all closed subsets of \ca{Y} with the Effros-Borel structure \cf \cite[12.C]{kechris_classical_dst}. A function $\delta: F(\ca{Y}) \to \ca{Y}$ is a \emph{choice function} if for all $\emptyset \neq F \in F(\ca{Y})$ we have that $\delta(F) \in F$.\smallskip

\textbf{Step 1.}  By easy calculations one can check that for every choice function $\delta: F(G) \to G$ it holds
\begin{align*}
y \in G \cdot x \iff& (\exists g)[\delta(g G_x) = g \ \& \ y = g \cdot x]\\\label{equation strong equivalence orbit x}
                    \iff& (\exists! g))[\delta(g G_x) = g \ \& \ y = g \cdot x],
\end{align*}
where $\exists !$ stands for ``there exists unique".\smallskip

\textbf{Step 2.} A \emph{Souslin scheme} on a Polish space \ca{Y} is any family $(U_s)_{s \in \om^{<\om}}$ of subsets of \ca{Y}. We say that a given Souslin scheme $(U_s)_{s \in \om^{<\om}}$ is \emph{good} if $U_\emptyset = \ca{Y}$, $\overline{U_{\cn{s}{i}}} \subseteq U_s$, $U_s = \cup_i U_{\cn{s}{i}}$ and diam$(U_s) \leq 2^{-\lh(s)}$ if $s \neq \emptyset$, where $\cn{s}{i} = (s_0,\dots,s_{n-1},i) \ \text{and} \ n = \text{the length of} \ s$.

The \emph{associated} function of a good Souslin scheme $\ca{U}: =( U_s)_s$ on $\ca{Y}$ is $f: \baire \to \ca{Y} : \{f(\alpha)\} = \cap_n U_{(\alpha(0),\dots, \alpha(n-1))}$. It is easy to verify that the associated function of a good Souslin scheme is continuous, and if moreover the good Souslin scheme consists of open sets, then the associated function is also open.

\begin{proposition}[Folklore?]
\label{proposition effective Souslin scheme}
Every recursive Polish space $\ca{Y}$ admits a good Souslin scheme, which has a recursive associated function. 
\end{proposition}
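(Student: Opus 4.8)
The plan is to synthesize the scheme from a recursive enumeration of basic open balls, taking as the children of a node all those balls of sufficiently small radius whose closure is certified to lie inside the node by a single triangle‑inequality inequality. Fix a recursive presentation of $\ca{Y}$ \cf\cite[3B]{yiannis_dst}, that is, a recursively dense sequence $(r_n)_{\n}$ for which the relations $d(r_m,r_{m'})<q_k$ are semirecursive, and write $B(r_m,q_k)$ for the open ball of centre $r_m$ and radius $q_k$. I set $U_\emptyset=\ca{Y}$ and define the remaining sets by recursion on $\lh(s)$ so that every non‑empty $U_s$ with $s\neq\emptyset$ is one of the balls $B(r_m,q_k)$ with $q_k>0$; such an $s$ is then coded by the pair $(m,k)$, and the map $s\mapsto(m,k)$ will be recursive.

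For the recursion step, given $U_s=B(r_a,q_b)$ (the case $U_\emptyset=\ca{Y}$ being the obvious degenerate one, where the inclusion clause below is vacuous), consider
\[
\tu{Val}_s=\set{(m,k)}{q_k>0,\ q_k\le 2^{-\lh(s)-2},\ d(r_m,r_a)+q_k<q_b}.
\]
This set is semirecursive uniformly in $(a,b)$, hence in $s$. The inequality $d(r_m,r_a)+q_k<q_b$ forces $\overline{B(r_m,q_k)}\subseteq B(r_a,q_b)$ by the triangle inequality, and the radius bound forces $\mathrm{diam}\,B(r_m,q_k)\le 2^{-\lh(\cn{s}{i})}$. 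A density estimate shows $\tu{Val}_s$ is infinite and that the admitted balls already cover $U_s$: for $y\in U_s$ put $\delta=\min\{q_b-d(y,r_a),\,2^{-\lh(s)-1}\}>0$, choose $r_m$ with $d(y,r_m)<\delta/4$ and a rational $q_k\in(\delta/4,\delta/2)$; then $q_k\le 2^{-\lh(s)-2}$, $\,y\in B(r_m,q_k)$, and $(m,k)\in\tu{Val}_s$. I then let $i\mapsto(m_i,k_i)$ be the canonical (total, uniformly recursive) enumeration of the infinite r.e.\ set $\tu{Val}_s$ and set $U_{\cn{s}{i}}=B(r_{m_i},q_{k_i})$. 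This produces a recursive good Souslin scheme: $U_\emptyset=\ca{Y}$; the closure‑containment and diameter bounds hold by construction; and $U_s=\cup_i U_{\cn{s}{i}}$ since each child lies in $U_s$ while the covering property puts every point of $U_s$ into some child. As all non‑empty $U_s$ are genuine balls, no $U_s$ along a branch is empty, whence $\cap_n U_{\alpha\upharpoonright n}=\cap_n\overline{U_{\alpha\upharpoonright n}}$ is a single point by completeness and the associated function $f$ is total.

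It remains to see that $f$ is recursive, for which by the standard criterion \cf\cite[3D]{yiannis_dst} it suffices that $f(\alpha)\in B(r_m,q_k)$ be semirecursive in $(\alpha,m,k)$. Writing $r_{c(s)}$ and $\rho(s)$ for the recursively computed centre and radius of $U_s$, I claim
\[
f(\alpha)\in B(r_m,q_k)\iff(\exists n)\big[\,d(r_{c(\alpha\upharpoonright n)},r_m)+\rho(\alpha\upharpoonright n)<q_k\,\big].
\]
The implication from right to left is immediate, since the bracketed inequality certifies $\overline{U_{\alpha\upharpoonright n}}\subseteq B(r_m,q_k)$ and $f(\alpha)\in U_{\alpha\upharpoonright n}$; for the converse, if $f(\alpha)\in B(r_m,q_k)$ then, using $\rho(\alpha\upharpoonright n)\to 0$ and $d(r_{c(\alpha\upharpoonright n)},f(\alpha))<\rho(\alpha\upharpoonright n)$, the inequality holds for all large $n$. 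The right‑hand side being a semirecursive predicate of $(\alpha,m,k)$, the function $f$ is recursive.

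I expect the main obstacle to be exactly the reconciliation, at the recursion step, of the three demands on the children of a node: the inclusion $\overline{U_{\cn{s}{i}}}\subseteq U_s$ must be enforced by a \emph{semirecursive}, one‑sided condition, yet that same family must still cover \emph{all} of $U_s$ and keep every set non‑empty (so that $f$ is total and the branch intersections are singletons). The certificate $d(r_m,r_a)+q_k<q_b$ is what resolves this tension simultaneously: it is semirecursive, it implies the closure inclusion, and the density estimate shows the balls it admits already exhaust $U_s$. Once this is in place the remaining verifications — the scheme axioms and the neighbourhood‑relation computation for $f$ — are routine.
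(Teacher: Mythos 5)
Your proof is correct and follows essentially the same route as the paper's: each basic ball is refined into smaller basic balls whose closures are certified inside it by a recursive triangle-inequality predicate on centres and radii, and the recursiveness of the associated function is read off from the shrinking radii via exactly the same neighbourhood criterion. The only real difference is bookkeeping --- the paper indexes the children of a node by all of $\omega$, substituting a fixed default ball when an index fails the inclusion test, whereas you enumerate the infinite r.e.\ set of admissible children; both devices work.
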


\begin{proof}
Let $d$ be a compatible metric for \ca{Y} and $\bar{r} = (r_j)_{j \in \om}$ a compatible recursive presentation. We denote by $N(k)$ be the open ball $B(r_{(k)_0},q_{(k)_1})$ with center $r_{(k)_0}$ and $d$-radius $q_{(k)_1}$, where $(q_i)_{i \in \om}$ is the enumeration of all non-negative rational numbers that we fixed in the introduction.

By dividing $d$ with $1+d$ we may assume that $d \leq 1$. We fix some $k_0 \in \om$ such that $N(k_0) = \ca{Y}$.

The idea is to write each $N(k)$ as the recursive union of some basic neighborhoods $N(m)$, $m \in I_k$, with $\overline{N(m)} \subseteq N(k)$ and radius$(N(m)) \leq 2^{-1}\cdot$radius$(N(k))$. Here ``recursive" means that the set $I(k,n) \iff n \in I_k$ is recursive.

Given $r_j \in N(k)$ then for any $t \in \om$ with $0 < q_t < q_{(k)_1} - d(r_j, r_{(k)_0})$ we have that $\overline{N(\langle j, t \rangle)} \subseteq N(k)$. We define 
\begin{align*}
I(k,m) \iff& \ d(r_{(m)_0},r_{(k)_0}) < q_{(k)_1} \ \& \ 0 < q_{(m)_1} < q_{(k)_1} - d(r_j, r_{(k)_0})\\
& \ \& \ q_{(m)_1} \leq 2^{-1} \cdot q_{(k)_1} ,
\end{align*}
so that $\overline{N(m)} \subseteq N(k)$ and radius$(N(m)) \leq 2^{-1} \cdot$radius$(N(k))$, when $I(k,m)$ holds. Clearly $I$ is a recursive set and each $k$-section $I_k$ of $I$ is non-empty, provided that $q_{(k)_1} > 0$ (and thus $N(k) \neq \emptyset$).

We prove round-robin style that
\begin{align}
\cup_{m \in I_k} N(m) = \cup_{m \in I_k} \overline{N(m)} = N(k) 
\end{align}
for all $k \in \om$.

The left-to-right inclusions are clear. Now suppose that $x \in N(k)$ and choose some $t \in \om$ such that $0 < 2 \cdot q_t < q_{(k)_1} - d(x,r_{(k)_0})$. We consider some $r_j \in B(x,q_t)$.  We then have
\begin{align*}
d(r_j,r_{(k)_0}) < d(r_j,r_{(k)_0}) + q_t \leq d(r_j,x) + d(x,r_{(k)_0}) + q_t < 2 \cdot q_t + d(x,r_{(k)_0}) < q_{(k)_1}.
\end{align*}
This shows that $d(r_j,r_{(k)_0}) < q_{(k)_1}$ and also that $q_t < q_{(k)_1} - d(r_j,r_{(k)_0})$. Moreover from the inequality $2 \cdot q_t + d(x,r_{(k)_0}) < q_{(k)_1}$ we obtain that $q_t < 2^{-1} \cdot q_{(k)_1}$. Hence $I(k,\langle j, t \rangle)$ holds. Moreover $d(r_j,x) < q_t$, \ie $x \in N(\langle j,t \rangle)$. This settles the inclusion $N(k) \subseteq \cup_{m \in I_k} N(m)$.

Finally we define recursively on $\lh(s)$ the Souslin scheme $(U_s)_{s}$ and the auxiliary function $\tau: \om^{<\om} \to \om$ as follows:\footnote{Formally we define partial functions, as our definition does not exclude a priori the possibility that $I_{\tau(s)}$ is the empty set and so $m_s$ is not defined. Of course, since we always have positive radii, the latter case never occurs, and therefore our functions are in fact total.}
\begin{align*}
(U_\emptyset,\tau(\emptyset)) =& \ (N(k_0),k_0) = (\ca{Y},k_0)\\
(U_{\cn{s}{m}}, \tau(\cn{s}{m})) =& \ 
\begin{cases}
(N(m),m), & \ \text{if} \ I(\tau(s),m),\\
(N(m_s),m_s),& \ \text{if $\neg I(\tau(s),m)$, where $m_s = \min I_{\tau(s)}$}.
\end{cases}
\end{align*}
By an easy induction on the length of $s$ one can see that $U_s = N(\tau(s)) \neq \emptyset$ for all $s$. It is also easy to verify that  $U_s = \cup_{m \in \om} U_{\cn{s}{m}} = \cup_{m \in \om} \overline{U_{\cn{s}{m}}}$, and that radius$(U_s) \leq 2^{-\lh(s)}$ for all $s$. Hence $(U_s)_s$ is a good Souslin scheme.

Finally we show that the associated function $f: \baire \to \ca{Y}$ is recursive. Since $f(\alpha) \in U_{\alpha \upharpoonright n} = N(\tau(\alpha \upharpoonright n))$ and the radius of the latter set is at most $2^{-n}$ we have that $d(f(\alpha), r_{(\tau(\alpha \upharpoonright n))_0}) \leq 2^{-n}$ for all $n$. It is then easy to verify that 
\begin{align*}
f(\alpha) \in N(m) 
\iff& \ d(f(\alpha),r_{(m)_0}) < q_{(m)_1}\\
\iff& \ (\exists n)[2^{-n} < q_{(m)_1} - d(r_{(m)_0},r_{(\tau(\alpha \upharpoonright n))_0})],
\end{align*}
for all $\alpha,m$. Hence $f$ is recursive.
\end{proof}

Now we fix a recursive Polish $G$-space $(\ca{X},G,\cdot)$ and a good Souslin scheme $(U_s)_s$ for $G$ with a recursive associated function $f: \baire \to G$.

For all non-empty $F \in F(G)$ we define the pruned tree $$T_F = \set{s \in \om^{<\om}}{F \cap U_s \neq \emptyset}$$ and we let $\alpha_F$ be the leftmost infinite branch of $T_F$. We also consider the choice function 
\[
\delta: F(G) \to G : F \mapsto f(\alpha_F).\footnote{Notice that $F(G)$ may not be a recursive Polish space. However this is not an obstacle, since  in the computations we can easily bypass any direct reference to $F(G)$.}\smallskip
\]
\textbf{Step 3.} There exists an arithmetical relation $A \subseteq \baire \times \ca{X} \times G \times \om^{<\om}$ such that
\[
g \cdot G_x \cap U_s \neq \emptyset \iff A(\W^x,x,g,s)
\]
for all $x,g,s$.

In order to prove this we check first that for all $x,g,s$ it holds
\begin{align}\label{equation lemma estimations of St}
(\exists h)[hx =x \ \& \ g \cdot h \in U_s] \iff& (\exists i,k)\big \{[(\forall j)[r_j \in N(G,k) \longrightarrow g \cdot r_j \in U_{\cn{s}{i}}]]\\ \nonumber
                   & \hspace*{20mm} \& \ (\exists h)[h \cdot x =x \ \& \ h \in N(G,k)] \big\}
\end{align}
where $(r_j)_{j \in \om}$ is the recursive presentation of $G$ and $N(G,k)$ is the $k$-th basic neighborhood of $G$ which comes from $(r_j)_{j \in \om}$.

The right-hand side of the preceding equivalence essentially says that there is a basic neighborhood $N(G,k)$ of $G$, which contains a member of the stabilizer of $x$, and is contained in a set of the form $g \cdot \overline{U_{\cn{s}{i}}} \subseteq g \cdot U_s$. The latter equivalence is proved using the property of the Souslin scheme being good, the density of $(r_j)_{j \in \om}$ and the continuity of the group action.

Having established (\ref{equation lemma estimations of St}) we observe that, using the Kleene Basis Theorem, the $h$ on the right-hand side of the latter equivalence can be chosen to be recursive in $\W^x$. Hence by taking the arithmetical set
\[
C(\alpha,x,k) \iff (\exists h \tleq \alpha)[h \cdot x =x \ \& \ h \in N(G,k)]]
\]
we conclude that
\[
g \cdot G_x \cap U_s \neq \emptyset 
\iff (\exists h)[hx =x \ \& \ g \cdot h \in U_s] 
\iff (\exists i, k)[D(g,k,s,i) \ \& \ C(\W^x,x,k)],
\]
where $D$ is defined according to (\ref{equation lemma estimations of St}). We then take 
\[
A(\alpha,x,g,s) \iff (\exists i, k)[D(g,k,s,i) \ \& \ C(\alpha,x,k)].
\]

\textbf{Step 4.} There exists an arithmetical relation $Q \subseteq \baire \times G \times \om$ such that
\[
\delta(g \cdot G_x) \in N(G,k) \iff Q(\W^x,x,g,k)
\]
for all $x, g, k$.

To see this, let $N(\baire,s)$ for $s \in \om^{<\om}$ be the usual $s$-th basic neighborhood of $\baire$. Since $f: \baire \to G$ is recursive we have that
\[
f(\alpha) \in N(G,k) \iff (\exists s)[\alpha \in N(\baire,s) \ \& \ R^\ast(s,k)]
\]
for some recursive $R^\ast \subseteq \om^{<\om} \times \om$. Then we can easily see that
\begin{align*}
\delta(g \cdot G_x)  \in N(G,k) \iff& \ f(\alpha_{g \cdot G_x}) \in N(G,k)\\ \label{equation lemma estimation Rtau A}
\iff& \ (\exists s)[\alpha_{g \cdot G_x} \in N(\baire,s) \ \& \ R^\ast(s,k)],
\end{align*}
where $\alpha_{g \cdot G_x}$ is as above the left-most infinite branch of the tree $T_F$ for $F = g \cdot G_x \in F(G)$.

Now we observe that
\[
\alpha_{g \cdot G_x} \in N(\baire,s)
\iff \ g \cdot G_x \cap U_s \neq \emptyset \ \& \ (\forall t \in \om^{\lh(s)})[t <_{\rm lex} s \longrightarrow g \cdot G_x \cap U_t = \emptyset]
\]
and using the arithmetical relation $A$ in the preceding step, it follows that there exists an arithmetical relation $Q \subseteq \baire \times G \times \om^{<\om}$ such that
\[
\alpha_{g \cdot G_x} \in N(\baire,s) \iff Q(\W^x,x,g,s)
\]
for all $x,g,i,j,s$. 

\smallskip

\textbf{Step 5.} For all $x,y \in \ca{X}$ the \tu{(}possibly empty\tu{)} set
\[
A_{x,y}:=\set{g \in G}{\delta(g \cdot G_x) = g \ \& \ y = g \cdot x}
\]
is arithmetical in $(\W^x,y)$. This is immediate from the key property of the set $Q$ in Step 4 and the fact that  $\Sigma^0_n(\W^x,x,y) = \Sigma^0_n(\W^x,y)$ for every $n \geq 1$. From Step 1, it follows that $A_{x,y}$ is at most a singleton, and therefore when it is non-empty its unique point is $\del(\W^x,y)$. Hence
\begin{align*}
y \in G \cdot x 
\iff& \ A_{x,y} \neq \emptyset\\
\iff& \ (\exists g \in \del(\W^x,y))[\delta(g \cdot G_x) = g \ \& \ y = g \cdot x]\\
\iff& \ (\exists g \in \del(\W^x,y))[ y = g \cdot x].
\end{align*}
This finished the sketch of the proof.

\end{document}